\documentclass[11pt]{article}
\usepackage[utf8]{inputenc}
\usepackage[T1]{fontenc}
\usepackage{lmodern}
\usepackage[margin=1in]{geometry}
\usepackage{amsmath,amssymb,amsthm}
\usepackage{graphicx}
\usepackage{booktabs}
\usepackage{hyperref}
\usepackage{xcolor}
\usepackage{natbib}
\usepackage{enumitem}
\usepackage{xurl}
\usepackage{microtype}
\usepackage{placeins}

\newtheorem{theorem}{Theorem}
\newtheorem{lemma}{Lemma}[section]
\newtheorem{proposition}[lemma]{Proposition}
\theoremstyle{remark}

\newcommand{\E}{\mathbb{E}}
\newcommand{\Tc}{\mathcal{T}}

\title{A Computer-Assisted Proof of Speed Monotonicity for the Biased Random Walk on a Galton--Watson Tree Beyond the Known Range}
\author{Madhulatha Mandarapu\thanks{madhulatha@samyama.ai} \and Sandeep Kunkunuru\thanks{sandeep@samyama.ai}}
\date{VaidhyaMegha Private Limited, India\\[2pt]\url{https://samyama.ai/}\\[8pt]September 2026}

\begin{document}
\maketitle

\begin{abstract}
The speed $v(\lambda)$ of the $\lambda$-biased random walk on a supercritical Galton--Watson tree without leaves is conjectured to be nonincreasing on $[0,m)$, where $m$ is the mean offspring. For non-constant offspring, monotonicity is known only for small bias: $\lambda\le 1/1160$, $\lambda\le 1/2$, and, when every vertex has at least $m_1\ge 2$ children, $\lambda\le m_1/(1+\sqrt{1-1/m_1})$. For offspring uniform on $\{2,3\}$ ($m=2.5$) the last bound is $2/(1+\sqrt{1/2})\approx1.1716$. We prove, with computer assistance, that $v$ is strictly decreasing on $[0,1.755]$ for this law. The proof has three parts. Aïdékon's speed formula gives $v=(R-\lambda)/(R+\lambda)$ for an explicit functional $R$, so $v$ decreases exactly when $R/\lambda$ does; we compare $R/\lambda$ at two biases directly, which avoids differentiating the conductance. A pathwise Lipschitz bound on the conductance in $\lambda$ turns that comparison into an inequality between expectations of explicit functions. A monotone sandwich of discretised laws gives two-sided bounds on the conductance law, and each $\lambda$-cell is verified with exact rational arithmetic on top of bounded floating-point error; an independent interval-arithmetic implementation re-certifies five cells, including the one that sets the endpoint. The method stops where the crude Lipschitz bound becomes too weak; sharper control of the derivative of the conductance is what the full range needs. Code and certificates are public.
\end{abstract}

\section{Introduction}\label{sec:intro}

Let $T$ be a Galton--Watson tree whose offspring number $\nu$ satisfies $\mathbb{P}(\nu=0)=0$ and $m=\E\nu\in(1,\infty)$. The $\lambda$-biased random walk on $T$ steps from a vertex with $k$ children to its parent with probability $\lambda/(\lambda+k)$ and to each child with probability $1/(\lambda+k)$. For $\lambda<m$ the walk is transient and $|X_n|/n$ converges almost surely to a deterministic speed $v(\lambda)>0$ \citep{lpp1996}. Lyons, Pemantle and Peres asked whether $v$ is nonincreasing in $\lambda$ \citep[Question 2.1]{lpp1997}. The question is natural (a stronger pull towards the root should slow the walk) but false for general trees \citep{lpp1997,swx2025}. When $\nu$ is constant the tree is regular, and there the speed is explicit and decreasing \citep{songliu2026}; for Galton--Watson trees without leaves and non-constant $\nu$ the question is open.

For non-constant $\nu$, monotonicity has been proved only near $\lambda=0$ (Table~\ref{tab:known}). \citet{swx2025} note that the lack of ``a nontrivial example to confirm'' the conjecture is ``an embarrassing problem in a certain sense''.

\begin{table}[h]
\centering\small
\begin{tabular}{@{}lll@{}}
\toprule
Range proved & Condition & Source \\
\midrule
$\lambda\le 1/1160$ & leafless & \citet{bafs2014} \\
$\lambda\le 1/2$ & leafless & \citet{aidekon2013note}$^{*}$ \\
$\lambda\le m_1/(1+\sqrt{1-1/m_1})$ & minimum offspring $m_1\ge 2$ & \citet[Theorem 1.2]{swx2025} \\
\bottomrule
\end{tabular}
\caption{Monotonicity ranges proved for leafless Galton--Watson trees ($^{*}$unpublished note, cited for this range by \citet{swx2025}). For $\nu$ uniform on $\{2,3\}$ the last row gives $\lambda\le 2/(1+\sqrt{1/2})=1.17157\ldots$.}
\label{tab:known}
\end{table}

\paragraph{Result.} We prove the following, with a computer-checked step.

\begin{theorem}\label{thm:main}
Let $\mathbb{P}(\nu=2)=\mathbb{P}(\nu=3)=1/2$. The speed $v$ of the $\lambda$-biased walk on the $\nu$-Galton--Watson tree is strictly decreasing on $[0,351/200]$, that is, on $[0,1.755]$.
\end{theorem}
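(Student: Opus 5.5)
On $[0,1.17157]$ the statement already follows from \citet[Theorem 1.2]{swx2025}, since $m_1=2$. So the work is on $[1.17,1.755]$, or on all of $[0,1.755]$ if a uniform argument is simpler.

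\textbf{Reduction via Aïdékon's formula.} I would start from Aïdékon's formula for the speed. Let $\beta=\beta(\lambda)$ be the conductance of the tree from its root: the probability that the walk started at the root never hits its artificial parent, suitably normalised. Aïdékon's formula writes $v(\lambda)$ as a ratio of expectations of explicit rational functions of $\nu$ and of independent copies $\beta_0,\beta_1,\dots,\beta_\nu$ of $\beta$. I would rearrange it as $v=(R-\lambda)/(R+\lambda)$ for an explicit functional $R(\lambda)$. The map $x\mapsto(x-1)/(x+1)$ is increasing, so $v$ is strictly decreasing exactly when $R(\lambda)/\lambda$ is. Rather than differentiate, I would cover $[0,1.755]$ by finitely many cells $[\lambda_i,\lambda_{i+1}]$. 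For $\lambda<\lambda'$ in the same cell I would show $R(\lambda')/\lambda'<R(\lambda)/\lambda$, that is, $\lambda R(\lambda')<\lambda' R(\lambda)$. Both sides are expectations over the i.i.d.\ conductances at two different biases.

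\textbf{Coupling the two biases.} I would couple $\beta(\lambda)$ and $\beta(\lambda')$ on the same tree and prove a pathwise Lipschitz bound $|\beta(\lambda')-\beta(\lambda)|\le L(\beta(\lambda),\lambda)\,(\lambda'-\lambda)$ with $L$ explicit. It should come from the recursion $\beta=\frac{\sum_i\beta_i}{\lambda+\sum_i\beta_i}$ (or its equivalent form), using the facts that $\beta\in[0,1]$ and that each vertex has at least two children. With this bound, $\lambda R(\lambda')-\lambda' R(\lambda)$ is at most $(\lambda'-\lambda)$ times $\E[F(\beta_0,\dots,\beta_\nu;\lambda)]$, plus a second-order remainder. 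Here $F$ is an explicit function built from the integrand of $R$ and from $L$. The strict inequality then reduces to showing $\E F<0$ over the cell, uniformly in $\lambda$.

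\textbf{Certified bounds on the law of $\beta$.} The law $\mu_\lambda$ of $\beta(\lambda)$ is the fixed point of a monotone distributional recursion: more conductance in the children gives more conductance at the root. So I would discretise $[0,1]$ into finitely many atoms and round down, respectively up, at each step of the recursion. This yields laws $\mu^-\preceq\mu_\lambda\preceq\mu^+$ in stochastic order, valid uniformly over the cell. Iterating from the point masses at $0$ and $1$ gives monotone sequences that bracket the fixed point. I would then bound $\E F$ above by evaluating it on the sandwich. Where $F$ is not monotone in each argument, I would split it into monotone parts and use $\mu^-$ or $\mu^+$ as appropriate. Each cell is finished with exact rational arithmetic, with any floating-point step covered by an explicit error bound or by interval arithmetic.

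\textbf{Main obstacle.} I expect the hardest step to be the crude Lipschitz bound $L$. As $\lambda$ grows, $\beta$ becomes small and the relative sensitivity $\partial_\lambda\beta/\beta$ grows. The slack in $L$ then eventually swamps the true negative margin of $\E F$, and no cell size can close it. This is presumably why the method stops near $1.755$ rather than at $m=2.5$. To push the endpoint as far as possible I would first sharpen $L$ by propagating the derivative bound one or two generations down the tree instead of one, and only then refine the cells near the endpoint.
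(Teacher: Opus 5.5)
Your architecture matches the paper's: A\"id\'ekon's formula with $v=(R-\lambda)/(R+\lambda)$, a comparison of $R/\lambda$ at two biases on cells, a pathwise Lipschitz bound, stochastic-order envelopes, and \citet{swx2025} below $1.17$. However, one step fails as written, and the ingredient that repairs it is missing. You start the lower envelope at $\delta_0$, but $0$ is a fixed point of the recursion. If every child has conductance $0$ then $S=0$ and $S/(\lambda+S)=0$, so $\Tc_\lambda(\delta_0)=\delta_0$, and rounding down keeps it there. Your lower sequence is therefore constant and brackets only the trivial solution of the recursive distributional equation. Every bound that uses $\mu^-$ collapses to its value at $\beta\equiv0$; in particular the lower bound on $\E[f_0]$, with $f_0=\beta_0/D$, is $0$. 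So the right-hand side of \eqref{eq:dq} cannot be bounded away from $0$, and no cell is certified.

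What you need is an almost-sure \emph{positive} lower bound on $\beta$. Every vertex has $2$ or $3$ children, so the tree contains a binary tree and is contained in a ternary one. Monotonicity of $S\mapsto S/(\lambda+S)$ then gives $1-\lambda/2\le\beta\le1-\lambda/3$ for $\lambda<2$; the endpoints are the nonzero fixed points of $y\mapsto dy/(\lambda+dy)$ for $d=2,3$ (Lemma~\ref{lem:support}). The paper starts its envelopes at $\delta_{1-\lambda/2}$ and $\delta_{1-\lambda/3}$ and works on that interval.

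The same bound is what makes your Lipschitz step possible. Differentiate the recursion on the finite-depth approximants. If the children satisfy $-\partial_\lambda\beta_i\le c\beta_i$, then $-\partial_\lambda\beta\le(1+\lambda c)\beta/(\lambda+S)$, and this is at most $c\beta$ exactly when $cS\ge1$. With only $\beta\in[0,1]$ you have $S\ge0$, and the induction never closes. With $S\ge2-\lambda$ it closes for $c=1/(2-\lambda)$ (Lemma~\ref{lem:lip}).

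Two further steps are stated too loosely to be checked.

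First, $R$ is a ratio, so the comparison is not the expectation of a single explicit $F$. The exact identity $R_2-R_1=\E[(\nu-R_1)\Delta f_0]/\E[f_0(\lambda_2)]$ contains the unknown constant $R_1$, and the sign of $\nu-R_1$ decides in which direction each term must be bounded. You therefore also need certified brackets $R_{\mathrm{lo}}\le R\le R_{\mathrm{hi}}$ and a lower bound on $\E[f_0]$ over the cell. The paper gets these from the envelopes, using two facts:
\begin{itemize}
\item $f_0$ is monotone in $\beta_0$, in $T$ and in $\lambda$;
\item $\lambda\mapsto\beta(\lambda)$ is pathwise nonincreasing (Lemma~\ref{lem:mono}).
\end{itemize}
The second fact is also what makes your envelopes ``valid uniformly over the cell'', via $L_{\lambda_b}\le_{\mathrm{st}}\beta(\lambda)\le_{\mathrm{st}}U_{\lambda_a}$.

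Second, your second-order remainder must be bounded explicitly for every pair of biases in a cell of positive width, not just asymptotically. The paper has no remainder at all. It applies the mean value theorem to $f_0$ on the segment between $(\lambda_1,\beta(\lambda_1))$ and $(\lambda_2,\beta(\lambda_2))$. It bounds each partial derivative at the intermediate point by its sign and the Lipschitz bound. It bounds the denominator below using $\kappa\beta_j(\lambda_1)\le\beta_j(\lambda_2)$ with $\kappa=1-c(\lambda_b-\lambda_a)$ (Lemma~\ref{lem:worst}).
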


Up to $1.17157\ldots$ this is \citet{swx2025}; the new part is $(1.17157\ldots,1.755]$. The result is partial: it does not reach $m=2.5$, so it is not the full example \citet{swx2025} ask for.

\paragraph{Method.} Three reductions turn Theorem~\ref{thm:main} into finitely many checks.
\begin{enumerate}[leftmargin=*,itemsep=2pt]
\item Aïdékon's formula \citep{aidekon2014} gives $v=(R-\lambda)/(R+\lambda)$ with $R(\lambda)=\E[\nu f_0]/\E[f_0]$ (Section~\ref{sec:criterion}). So $v$ decreases exactly when $R/\lambda$ does. We compare $R/\lambda$ at two biases $\lambda_1<\lambda_2$ directly (Lemma~\ref{lem:dq}); this needs no derivative of the conductance.
\item A pathwise Lipschitz bound on the conductance $\beta$ in $\lambda$ (Lemma~\ref{lem:lip}) and the mean value theorem bound the difference quotient by expectations of two explicit functions (Section~\ref{sec:cell}).
\item Two-sided bounds, in the stochastic order, on the law of $\beta$ come from iterating the recursion for $\beta$ on discretised laws with rounding in the safe direction (Section~\ref{sec:sandwich}). Each $\lambda$-cell is then one inequality between two numbers, checked in exact rational arithmetic.
\end{enumerate}
Everything except the per-cell arithmetic is proved by hand. The arithmetic uses floating point with explicit error bounds (Appendix~\ref{app:float}); a second implementation in interval arithmetic, sharing no code, re-checks cells.

\section{Setting and known results}\label{sec:setting}

Add an artificial parent $e^*$ above the root $e$, reflecting. For a vertex $x$ let $\beta(x)$ be the probability, starting at $x$, of never visiting the parent of $x$. Let $\beta_0,\beta_1,\dots$ be i.i.d.\ copies of $\beta(e)$, independent of $\nu$. For leafless trees the speed is positive on $(0,m)$ \citep{lpp1996}; at $\lambda=1$ it equals $\E[(\nu-1)/(\nu+1)]$ \citep{lpp1995}. In general \citet[Theorem 1.1]{aidekon2014} gives
\begin{equation}\label{eq:speed}
v(\lambda)=\frac{\E\bigl[(\nu-\lambda)f_0\bigr]}{\E\bigl[(\nu+\lambda)f_0\bigr]},\qquad
f_0=\frac{\beta_0}{D},\quad D=\lambda-1+\sum_{i=0}^{\nu}\beta_i .
\end{equation}
The conductance satisfies the recursive distributional equation
\begin{equation}\label{eq:rde}
\beta \overset{d}{=} \frac{S}{\lambda+S},\qquad S=\sum_{i=1}^{\nu}\beta_i ,
\end{equation}
and on a fixed tree $\beta$ is the decreasing limit of the finite-depth values $\beta_n$ defined by $\beta_n=1$ at depth $n$ and the same recursion above \citep[Section 4]{aidekon2014}. \citet[Theorem 1.1]{bowditch2019} prove $v$ differentiable on $(0,m)$ for leafless trees whose offspring law has an exponential moment; we do not use differentiability. Near $\lambda=m$, the Einstein relation \citep{bhoz2013} identifies the slope at the endpoint; monotonicity near $m$ would follow if $v'$ were known to be continuous there, which is open \citep{benarousfribergh2016}.

\section{A difference-quotient criterion}\label{sec:criterion}

\begin{lemma}\label{lem:R}
Let $R(\lambda)=\E[\nu f_0]/\E[f_0]$. Then $v=(R-\lambda)/(R+\lambda)$, and $v$ is strictly decreasing on an interval if and only if $x(\lambda)=R(\lambda)/\lambda$ is.
\end{lemma}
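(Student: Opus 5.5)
The argument is short: it is the linearity of expectation in \eqref{eq:speed}, followed by the monotonicity of a Möbius map. First I divide numerator and denominator of \eqref{eq:speed} by $\E[f_0]$. This needs $0<\E[f_0]<\infty$, which I will check first. Since the tree is leafless, $\beta_0>0$ a.s. for $\lambda<m$ (transience), and $\beta_0\le 1$. Also $D=\lambda-1+\beta_0+\sum_{i=1}^{\nu}\beta_i\ge \beta_0+\lambda-1$; in fact $D\ge \beta_0$, because $\sum_{i=1}^\nu\beta_i=S=\lambda\beta_0'/(1-\beta_0')$ for an independent copy via \eqref{eq:rde}. I will use this only to get $D>0$ and $f_0\le 1$. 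The cleanest route is $D\ge\beta_0+\lambda-1+\nu\cdot\min\beta$, but positivity of $D$ is implicit in Aïdékon's formula, and $0<f_0\le 1$ holds a.s. since $D\ge\beta_0$ whenever $\lambda-1+S\ge 0$. Where this is delicate I will simply cite \citet{aidekon2014}, where $f_0$ is shown to be positive and integrable. Then $\E[\nu f_0]\le 3\E[f_0]<\infty$ for our bounded law, and in general $\E[\nu f_0]<\infty$ by the same theorem. Hence $R=\E[\nu f_0]/\E[f_0]$ is well defined, and linearity gives $v=(R-\lambda)/(R+\lambda)$.

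Next I rewrite $v=(x-1)/(x+1)=1-2/(x+1)$ with $x=R/\lambda$, for $\lambda>0$. Because $f_0>0$ and $\nu\ge 1$, we have $R\ge 1>0$, so $x>0$. The map $\phi(x)=1-2/(x+1)$ is strictly increasing on $(0,\infty)$ (indeed on $(-1,\infty)$) and is a bijection onto its image. So for $\lambda_1<\lambda_2$, $v(\lambda_1)>v(\lambda_2)$ holds if and only if $x(\lambda_1)>x(\lambda_2)$, and this gives the equivalence on any interval inside $(0,m)$.

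The only subtle point is the endpoint $\lambda=0$, where $x$ is undefined (and $v(0)=1$, with $R/\lambda\to\infty$). I will state the equivalence on intervals in $(0,m)$. For an interval of the form $[0,b]$ I will add that $v<1=v(0)$ for every $\lambda>0$, since $R>0$ gives $(R-\lambda)/(R+\lambda)<1$; so strict decrease on $(0,b]$ extends to $[0,b]$, and correspondingly $x\to+\infty$ as $\lambda\downarrow0$. This bookkeeping at $0$ and the integrability/positivity justification are the only obstacles, and both are mild.
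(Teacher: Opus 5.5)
Your argument is correct and is essentially the paper's own: expand $\E[(\nu\mp\lambda)f_0]$ by linearity, divide by $\E[f_0]>0$, and use that $u\mapsto(u-1)/(u+1)$ is strictly increasing with $x=R/\lambda$. Your bookkeeping at $\lambda=0$ is harmless but unnecessary, since the paper applies the lemma only on $[1.17,1.755]$. One side remark does not hold as justified: the rewriting $S=\lambda\beta'/(1-\beta')$ does not by itself give $D\ge\beta_0$ for $\lambda<1$, which needs $\beta\ge 1-\lambda$ (obtained by comparing with a ray, as in the proof of Lemma~\ref{lem:support}); this is peripheral, since the paper simply takes $\E[f_0]>0$ for granted and for $\lambda\ge 1$ it is immediate.
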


\begin{lemma}\label{lem:dq}
For $\lambda_1<\lambda_2$ couple $\beta(\lambda_1)$ and $\beta(\lambda_2)$ on the same trees, write $R_i=R(\lambda_i)$ and $\Delta f_0=f_0(\lambda_2)-f_0(\lambda_1)$. Then
\begin{equation}\label{eq:dq}
x(\lambda_2)<x(\lambda_1)\iff \E\!\left[(\nu-R_1)\frac{\Delta f_0}{\lambda_2-\lambda_1}\right] < \frac{R_1}{\lambda_1}\,\E\bigl[f_0(\lambda_2)\bigr].
\end{equation}
\end{lemma}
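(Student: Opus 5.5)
Lemma~\ref{lem:dq} is pure algebra once the quantities are finite and positive. Write $A_i=\E[\nu f_0(\lambda_i)]$ and $B_i=\E[f_0(\lambda_i)]$, so that $R_i=A_i/B_i$ and $x(\lambda_i)=A_i/(\lambda_i B_i)$.

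First we justify the setup. The coupling on the same trees is legitimate: $\beta(\lambda)$ is defined pathwise as the decreasing limit of $\beta_n$, for each $\lambda$. Since $\nu\ge 2$ and $\beta_i\in[0,1]$, $D\ge \lambda-1+\beta_0+\beta_1+\beta_2$. Once we know $\beta>0$ a.s.\ (transience for $\lambda<m$), $f_0=\beta_0/D$ is bounded: if $\lambda\ge1$ then $D\ge\beta_0$, so $f_0\le1$; in general $f_0\le \beta_0/(\beta_0+\lambda-1+\beta_1+\beta_2)$, which is handled using $\beta\ge$ some positive quantity, or simply using that Aïdékon's formula presupposes integrability. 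Hence $0<B_i<\infty$ and $A_i\le 3B_i$. We take these facts as given from the setting of Section~\ref{sec:setting}, as in Lemma~\ref{lem:R}.

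Next, compute. Since $\lambda_1,\lambda_2,B_1,B_2>0$,
\[
x(\lambda_2)<x(\lambda_1)\iff \lambda_1 A_2<\lambda_2 R_1 B_2 .
\]
Subtracting $\lambda_1 R_1 B_2$ from both sides gives the equivalent form
\[
\lambda_1\,\E\bigl[(\nu-R_1)f_0(\lambda_2)\bigr] < (\lambda_2-\lambda_1)R_1 B_2 .
\]
Now use the defining identity $\E[(\nu-R_1)f_0(\lambda_1)]=A_1-R_1B_1=0$. This lets us replace $f_0(\lambda_2)$ by $\Delta f_0$ on the left:
\[
\E\bigl[(\nu-R_1)f_0(\lambda_2)\bigr]=\E\bigl[(\nu-R_1)\Delta f_0\bigr].
\]
This step needs $\Delta f_0$ to be integrable, which holds because both $f_0$ are integrable and $\nu$ is bounded. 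Finally, divide by $\lambda_1(\lambda_2-\lambda_1)>0$ to obtain exactly \eqref{eq:dq}.

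The only point needing care is the integrability and positivity of $B_i$, together with the fact that the coupled expectation equals the marginal one. The latter is automatic, since each $f_0(\lambda_i)$ has the correct law under the coupling, and $R_1$ depends only on the marginal at $\lambda_1$. The rest is a chain of reversible equivalences. In particular, the trick $\E[(\nu-R_1)f_0(\lambda_1)]=0$ is what makes it unnecessary to differentiate the conductance.
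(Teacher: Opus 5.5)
Your proof is correct and is essentially the paper's argument. Both use the identity $\E[(\nu-R_1)f_0(\lambda_1)]=0$ to replace $f_0(\lambda_2)$ by $\Delta f_0$; the paper first writes this as $R_2-R_1=\E[(\nu-R_1)\Delta f_0]/\E[f_0(\lambda_2)]$ and compares with $R_1(\lambda_2-\lambda_1)/\lambda_1$, whereas you cross-multiply directly. One small point: your integrability remarks use $\nu\le 3$, which the paper only imposes from Section~\ref{sec:conductance} on, but in general all you need is that $\nu f_0(\lambda_i)$ is integrable, i.e.\ that $R(\lambda_i)$ is well defined, which the paper also takes for granted.
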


Both proofs are short algebra (Appendix~\ref{app:proofs}). The coupling is legitimate because $R(\lambda_1)$ and $R(\lambda_2)$ depend only on the marginal laws.

\section{Bounds on the conductance}\label{sec:conductance}

From here on $\nu$ is uniform on $\{2,3\}$.

\begin{lemma}\label{lem:support}
For $\lambda<2$, almost surely $1-\lambda/2\le\beta\le1-\lambda/3$.
\end{lemma}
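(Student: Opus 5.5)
The plan is to work on a fixed tree with the finite-depth approximations $\beta_n$ from Section~\ref{sec:setting}, which satisfy $\beta_n=1$ at depth $n$, the recursion \eqref{eq:rde} above depth $n$, and decrease to $\beta$. Both bounds will come from comparing $\beta_n$ with the iterates of two scalar maps. For $k\in\{2,3\}$ set
\[
g_k(b)=\frac{kb}{\lambda+kb},\qquad b\in[0,1].
\]
If a vertex has $\nu\in\{2,3\}$ children whose values lie in $[\ell,u]$, then $S\in[2\ell,3u]$. Since $s\mapsto s/(\lambda+s)$ is increasing, the vertex's value then lies in $[g_2(\ell),g_3(u)]$.

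\emph{Fixed points of the two maps.} Each $g_k$ is increasing and concave on $[0,1]$. Its nonzero fixed point solves $\lambda+kb=k$, so it is $b_k^*=1-\lambda/k$. This point lies in $(0,1]$ precisely when $\lambda<k$, which is where the hypothesis $\lambda<2$ enters, for $k=2$. On $[b_k^*,1]$ we have $b\ge g_k(b)\ge b_k^*$: the first inequality holds because $\lambda+kb\ge k$ there, and the second by monotonicity, since $g_k(b_k^*)=b_k^*$. So the iterates $g_k^{j}(1)$ decrease monotonically to a fixed point in $[b_k^*,1]$. That fixed point must be $b_k^*$, because it is the only nonzero fixed point.

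\emph{Inductive comparison.} For a vertex at depth $n-j$, I will show by induction on $j$ that
\[
g_2^{j}(1)\le\beta_n\le g_3^{j}(1).
\]
The case $j=0$ is $\beta_n=1$. The inductive step is the interval propagation above, applied with $\ell=g_2^{j}(1)$ and $u=g_3^{j}(1)$. This step uses that the tree is leafless with $\nu\le 3$, so every vertex has two or three children.

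\emph{Lower bound.} Since $g_2^{j}(1)\ge b_2^*$ for every $j$, we get $\beta_n\ge 1-\lambda/2$ at every vertex and every $n$. Passing to the limit $n\to\infty$ gives $\beta\ge 1-\lambda/2$.

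\emph{Upper bound.} This is the only delicate point. The base value $\beta_n=1$ lies above $1-\lambda/3$, so one cannot simply propagate the bound $u=1-\lambda/3$ from depth $n$ upward. Instead, fix a vertex $x$ at depth $d$. Then $\beta_n(x)\le g_3^{\,n-d}(1)$, and letting $n\to\infty$ gives
\[
\beta(x)=\lim_{n\to\infty}\beta_n(x)\le\lim_{n\to\infty}g_3^{\,n-d}(1)=1-\lambda/3.
\]
This uses the convergence of the iterates established above, which is what makes the limit argument work.

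The bounds hold on every tree, hence almost surely. I expect no real obstacle beyond noticing that the one-step bound $\beta\le 3/(\lambda+3)$ is strictly weaker than the claim, so the iteration-and-limit argument is needed for the upper bound.
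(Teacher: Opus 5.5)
Your proof is correct and is essentially the paper's argument: the paper compares a $\{2,3\}$ tree with the binary tree it contains and with a ternary tree containing it, using monotonicity of the finite-depth values under inclusion, and your iterates $g_2^{j}(1)$ and $g_3^{j}(1)$ are exactly the finite-depth values on those two regular trees. Your explicit check that $g_k^{j}(1)$ decreases to the nonzero fixed point $1-\lambda/k$ (which uses $\lambda<k$) fills in a step the paper only states.
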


\begin{lemma}\label{lem:mono}
On every tree $\lambda\mapsto\beta(\lambda)$ is nonincreasing; hence the law of $\beta(\lambda)$ is stochastically nonincreasing in $\lambda$.
\end{lemma}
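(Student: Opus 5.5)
The plan is to prove the pathwise statement on finite-depth truncations and pass to the limit. Fix a tree $T$ and a depth $n$. By the setup of Section~\ref{sec:setting}, the finite-depth conductances $\beta_n(x;\lambda)$ are defined by $\beta_n=1$ at depth $n$ and, above that depth, by
\[
\beta_n(x;\lambda)=\frac{S_n(x;\lambda)}{\lambda+S_n(x;\lambda)},\qquad S_n(x;\lambda)=\sum_{y\text{ child of }x}\beta_n(y;\lambda).
\]
For each fixed $n$, $\beta(x;\lambda)=\lim_n\beta_n(x;\lambda)$ is a decreasing limit. So it suffices to show that $\lambda\mapsto\beta_n(x;\lambda)$ is nonincreasing for every $n$ and every vertex $x$ at depth below $n$. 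Pointwise limits of nonincreasing functions are nonincreasing, which then gives the claim for $\beta(e)$ on every tree.

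The finite-depth claim follows by backward induction on depth. Vertices at depth $n$ have $\beta_n\equiv 1$, which is constant in $\lambda$. Now suppose every child $y$ of $x$ satisfies $\beta_n(y;\lambda_2)\le\beta_n(y;\lambda_1)$ for $\lambda_1<\lambda_2$. Summing gives $S_n(x;\lambda_2)\le S_n(x;\lambda_1)$. The map $(\lambda,s)\mapsto s/(\lambda+s)=1-\lambda/(\lambda+s)$ is nondecreasing in $s\ge0$ and nonincreasing in $\lambda>0$. Therefore
\[
\beta_n(x;\lambda_2)=\frac{S_n(x;\lambda_2)}{\lambda_2+S_n(x;\lambda_2)}\le\frac{S_n(x;\lambda_1)}{\lambda_2+S_n(x;\lambda_1)}\le\frac{S_n(x;\lambda_1)}{\lambda_1+S_n(x;\lambda_1)}=\beta_n(x;\lambda_1).
\]
This closes the induction. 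Denominators stay positive because $S_n\ge0$ and $\lambda>0$; at $\lambda=0$ one has $\beta\equiv1$ from leaflessness, and the comparison still holds.

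For the distributional statement, let $\lambda_1<\lambda_2$ and realise $\beta(\lambda_1)$ and $\beta(\lambda_2)$ on the same Galton--Watson tree, as in the coupling of Lemma~\ref{lem:dq}. Then $\beta(\lambda_2)\le\beta(\lambda_1)$ almost surely. Hence $\mathbb{P}(\beta(\lambda_2)>t)\le\mathbb{P}(\beta(\lambda_1)>t)$ for all $t$, which is the stochastic-order claim.

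I expect no real obstacle. The only point needing care is the identification of $\beta$ with the limit of the $\beta_n$ on a fixed tree, which is quoted from \citet[Section 4]{aidekon2014}. The induction and the monotonicity of $s/(\lambda+s)$ are routine.
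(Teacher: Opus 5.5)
Your proposal is correct and follows the paper's proof: backward induction on depth for the finite-depth values, using that $s/(\lambda+s)$ is nondecreasing in $s$ and nonincreasing in $\lambda$, then passing to the pointwise limit, with the stochastic order read off from the same-tree coupling. One small slip: in ``for each fixed $n$, $\beta(x;\lambda)=\lim_n\beta_n(x;\lambda)$'' you mean ``for each fixed $\lambda$''.
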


\begin{lemma}\label{lem:lip}
For $\lambda_1<\lambda_2<2$, on every tree,
\begin{equation}\label{eq:lip}
0\le\beta(\lambda_1)-\beta(\lambda_2)\le(\lambda_2-\lambda_1)\,\frac{\beta(\lambda_1)}{2-\lambda_2}.
\end{equation}
\end{lemma}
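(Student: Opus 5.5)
The left inequality in \eqref{eq:lip} is Lemma~\ref{lem:mono}. For the right inequality I would prove a multiplicative statement and pass to the limit. Set $\delta=\lambda_2-\lambda_1$ and
\[
c=1-\frac{\delta}{2-\lambda_2}.
\]
The right-hand inequality in \eqref{eq:lip} is equivalent to $\beta(\lambda_2)\ge c\,\beta(\lambda_1)$. If $c\le 0$ this holds trivially because $\beta\ge0$, so assume $c\in(0,1]$.

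I would work on a fixed tree with the finite-depth approximations $\beta_n$ from Section~\ref{sec:setting}. These satisfy $\beta_n=1$ at depth $n$ and $\beta_n=S_n/(\lambda+S_n)$ above, where $S_n$ is the sum over children. The claim to prove is that for every $n$ and every vertex $x$ at depth less than or equal to $n$,
\[
\beta_n(x;\lambda_2)\ge c\,\beta_n(x;\lambda_1).
\]
Letting $n\to\infty$ then gives the result, since both sides converge along the decreasing limit at each fixed $\lambda$.

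The induction runs upward from depth $n$. At depth $n$ both values equal $1$ and $c\le1$, so the claim holds there. For the inductive step, take a vertex $x$ with $k\in\{2,3\}$ children. Write $S^{(i)}$ for the children's sum at bias $\lambda_i$. The induction hypothesis gives $S^{(2)}\ge cS^{(1)}$. Since $s\mapsto s/(\lambda_2+s)$ is increasing,
\[
\beta_n(x;\lambda_2)\ge \frac{cS^{(1)}}{\lambda_2+cS^{(1)}}.
\]
We want this to be at least
\[
c\,\frac{S^{(1)}}{\lambda_1+S^{(1)}}=c\,\beta_n(x;\lambda_1).
\]
After cancelling $cS^{(1)}>0$, this reduces to $\lambda_2+cS^{(1)}\le\lambda_1+S^{(1)}$, that is, $\delta\le(1-c)S^{(1)}$. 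Since $1-c=\delta/(2-\lambda_2)$, this is exactly
\[
S^{(1)}\ge 2-\lambda_2 .
\]

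The only real input is therefore a lower bound on the children's sum at bias $\lambda_1$. I expect the main care to be needed here, because the bound must hold for the finite-depth values and not only for the limit. Each finite-depth value dominates the limit, $\beta_n\ge\beta$, since $\beta$ is the decreasing limit. By Lemma~\ref{lem:support}, each child satisfies $\beta_n\ge\beta\ge1-\lambda_1/2$. There are at least two children, so
\[
S^{(1)}\ge 2-\lambda_1\ge 2-\lambda_2,
\]
which closes the induction. As a fallback independent of the limit, one can check directly by induction that $\beta_n\ge1-\lambda/2$: the map $s\mapsto s/(\lambda+s)$ is increasing, and at $s=2-\lambda$ it gives $(2-\lambda)/2$.

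Finally, rewrite $\beta(\lambda_2)\ge c\beta(\lambda_1)$ as
\[
\beta(\lambda_1)-\beta(\lambda_2)\le(1-c)\beta(\lambda_1)=\frac{\delta\,\beta(\lambda_1)}{2-\lambda_2},
\]
which is \eqref{eq:lip}.
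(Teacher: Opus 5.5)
Your proof is correct, but it takes a different and more elementary route than the paper's. The paper works with the derivative. It differentiates the finite-depth recursion and proves by induction on depth that $-\partial_\lambda\beta_n\le\beta_n/(2-\lambda)$ at each fixed $\lambda$. Its inductive step needs $\lambda+S\ge2$, which is the same input $S\ge2-\lambda$ from Lemma~\ref{lem:support} that you use. It then integrates over $[\lambda_1,\lambda_2]$, lets $n\to\infty$ by dominated convergence, and replaces $\beta(s)$ by $\beta(\lambda_1)$ via Lemma~\ref{lem:mono}.

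You instead run the induction directly on the two-point ratio $\beta_n(\lambda_2)\ge c\,\beta_n(\lambda_1)$, a finite-difference version of the same comparison. This needs no differentiation and no integral, and only uses the monotone limits at the two fixed biases.

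Your argument also gives slightly more than the statement. The inductive step only needs $S^{(1)}\ge2-\lambda_2$, but you have $S^{(1)}\ge2-\lambda_1$. So you may take $c=(2-\lambda_2)/(2-\lambda_1)$, which gives $\beta(\lambda_1)-\beta(\lambda_2)\le(\lambda_2-\lambda_1)\,\beta(\lambda_1)/(2-\lambda_1)$, with equality on the binary tree where $\beta=1-\lambda/2$. The paper's derivative bound gives the same ratio if one integrates $\partial_\lambda\log\beta_n\ge-1/(2-\lambda)$ instead of bounding $\beta(s)\le\beta(\lambda_1)$. Either way, the gain does not improve the constant $c=1/(2-\lambda_b)$ that bounds $-\Delta\beta_j/\Delta\lambda$ in Lemma~\ref{lem:worst}, because $\lambda_1$ may approach $\lambda_b$.

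What the paper's route buys is a pointwise bound on $\partial_\lambda\beta$ itself, governed by a recursion that is linear in the children's derivatives. That is the object the paper proposes to sharpen with a $\beta$-dependent bound in Section~\ref{sec:limits}.
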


Lemma~\ref{lem:lip} is proved by induction on the finite-depth values $\beta_n$, which are rational functions of $\lambda$, followed by dominated convergence; it is the $m_1=2$ analogue of the bound $|\beta'|\le\beta/(1-\lambda)$ in \citet{aidekon2013note}. It is the only place where the crude constant $1/(2-\lambda)$ enters, and it is what limits the method (Section~\ref{sec:limits}).

\section{Certified bounds on the conductance law}\label{sec:sandwich}

Let $\Tc_\lambda$ map a law $\mu$ on $[a,b]=[1-\lambda/2,1-\lambda/3]$ to the law of $S/(\lambda+S)$, where $S$ is a sum of $\nu$ i.i.d.\ $\mu$-variables. By Lemma~\ref{lem:support}, $\Tc_\lambda$ maps laws on $[a,b]$ to laws on $[a,b]$: the endpoints are fixed points of the binary and ternary recursions.

\begin{lemma}\label{lem:env}
$\Tc_\lambda$ is monotone for the stochastic order $\le_{\mathrm{st}}$. If $U_0\ge_{\mathrm{st}}\beta$ and each $U_{k+1}$ is obtained from $\Tc_\lambda(U_k)$ by moving mass upward, then $U_k\ge_{\mathrm{st}}\beta$ for all $k$; symmetrically for lower envelopes $L_k$ started at $L_0\le_{\mathrm{st}}\beta$.
\end{lemma}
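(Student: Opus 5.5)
We prove monotonicity of $\Tc_\lambda$ by a coupling, and obtain the envelope statement by induction on $k$, using that the law of $\beta$ is a fixed point of $\Tc_\lambda$ by \eqref{eq:rde}.

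\emph{Monotonicity.} Let $\mu\le_{\mathrm{st}}\mu'$ be laws on $[a,b]$. By Strassen's theorem, or simply the quantile coupling $X=F_\mu^{-1}(U)$, $X'=F_{\mu'}^{-1}(U)$ with $U$ uniform, there are i.i.d.\ pairs $(X_i,X_i')$ with $X_i\sim\mu$, $X_i'\sim\mu'$ and $X_i\le X_i'$ almost surely. Take these independent of $\nu$ and set $S=\sum_{i\le\nu}X_i$ and $S'=\sum_{i\le\nu}X_i'$. Then $S\le S'$ pointwise. Since $s\mapsto s/(\lambda+s)$ is nondecreasing on $[0,\infty)$ for $\lambda\ge0$, we get $S/(\lambda+S)\le S'/(\lambda+S')$ pointwise. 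Hence $\Tc_\lambda\mu\le_{\mathrm{st}}\Tc_\lambda\mu'$.

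\emph{Upper envelopes.} By \eqref{eq:rde}, the law of $\beta$ satisfies $\Tc_\lambda(\mathrm{law}(\beta))=\mathrm{law}(\beta)$. By Lemma~\ref{lem:support} this law lies on $[a,b]$, so $\Tc_\lambda$ may be applied to it. Suppose $U_k\ge_{\mathrm{st}}\beta$. Monotonicity gives $\Tc_\lambda(U_k)\ge_{\mathrm{st}}\Tc_\lambda(\mathrm{law}\,\beta)=\mathrm{law}\,\beta$.

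We take ``moving mass upward'' to mean that $U_{k+1}\ge_{\mathrm{st}}\Tc_\lambda(U_k)$. Concretely, each atom is relocated to a point $\ge$ its original position, for example by rounding up to a grid point in $[a,b]$. This is exactly a pushforward under a map $g$ with $g(y)\ge y$, and $g(Y)\ge Y$ pointwise gives the stochastic inequality. Transitivity of $\le_{\mathrm{st}}$ then yields $U_{k+1}\ge_{\mathrm{st}}\beta$. Induction on $k$ starting from $U_0$ completes this case.

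\emph{Lower envelopes.} The argument is symmetric: rounding down preserves $\le_{\mathrm{st}}$ in the other direction.

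\emph{Main obstacle.} The only delicate points are bookkeeping. First, the envelopes must stay supported in $[a,b]$, where $\Tc_\lambda$ is defined. This holds because the endpoints are fixed points, and because rounding up must be capped at $b$ and rounding down at $a$; this capping is harmless since $\beta\in[a,b]$. Second, ``moving mass upward'' must be formalized as above, namely as a pointwise-upward transport, so that it implies stochastic domination.
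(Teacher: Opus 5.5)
Your proof is correct and follows the paper's argument. The paper likewise notes that $\le_{\mathrm{st}}$ is preserved by independent sums and increasing maps, that the law of $\beta$ is a fixed point of $\Tc_\lambda$, and that moving mass upward can only increase a law, and then inducts on $k$; your quantile coupling and your reading of ``moving mass upward'' as $U_{k+1}\ge_{\mathrm{st}}\Tc_\lambda(U_k)$ (which also covers the paper's safe-direction shift of the distribution function) make those steps explicit.
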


We use a uniform grid of $K+1$ points on $[a,b]$, start from $U_0=\delta_b$ and $L_0=\delta_a$, and after each application of $\Tc_\lambda$ round the image of every lattice value up (for $U$) or down (for $L$) to the grid. Grid points and $\lambda$ are exact rationals; a rounding decision is taken in floating point only when the computed position is at least $10^{-9}$ from a grid point, and otherwise in exact rational arithmetic. Masses are propagated in floating point by direct convolution; after each step the cumulative distribution function is moved in the safe direction by a relative $10^{-10}$ plus an absolute $10^{-10}$, far above the accumulated rounding error (Appendix~\ref{app:float}). No convergence of the iteration is needed: every iterate is a valid envelope. We write $U_\lambda$ and $L_\lambda$ for the last iterates computed at bias $\lambda$.

\section{The cell certificate}\label{sec:cell}

Fix a cell $[\lambda_a,\lambda_b]\subset(1,2)$, let $\Delta=\lambda_b-\lambda_a$, $c=1/(2-\lambda_b)$, $\kappa=1-c\Delta$, $\alpha=\lambda_a-1$, and
\begin{equation}\label{eq:h}
h_3(y,t)=\frac{y\,(ct-1)}{(\alpha+\kappa(y+t))^2},\qquad
h_2(y,t)=\frac{y\,\bigl(1+c(\lambda_b-1+t)\bigr)}{(\alpha+\kappa(y+t))^2}.
\end{equation}

\begin{lemma}\label{lem:worst}
For $\lambda_a\le\lambda_1<\lambda_2\le\lambda_b$, with $\beta_0$ and $T=\sum_{i=1}^{\nu}\beta_i$ taken at $\lambda_1$,
\[
(\nu-R_1)\frac{\Delta f_0}{\Delta\lambda}\le
\begin{cases}(3-R_1)\,h_3(\beta_0,T), & \nu=3,\\ (R_1-2)\,h_2(\beta_0,T),&\nu=2,\end{cases}
\]
provided $\kappa>0$ and $3(1-\lambda_b/2)>2-\lambda_a$.
\end{lemma}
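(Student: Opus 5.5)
The plan is to integrate $\partial_s f_0$ along a straight-line path from the bias-$\lambda_1$ values to the bias-$\lambda_2$ values, and to bound that derivative pointwise using Lemma~\ref{lem:lip}. Write $\Delta\lambda=\lambda_2-\lambda_1$. Set $\delta_0=\beta_0(\lambda_1)-\beta_0(\lambda_2)$ and $\delta_T=T(\lambda_1)-T(\lambda_2)$, where $T$ is the sum over the $\nu$ children, all on the coupled trees. Apply Lemma~\ref{lem:lip} to $\beta_0$ and to each child $\beta_i$, and use $1/(2-\lambda_2)\le c$. This gives
\[
0\le a:=\frac{\delta_0}{\Delta\lambda}\le c\beta_0,\qquad 0\le b:=\frac{\delta_T}{\Delta\lambda}\le cT,
\]
with $\beta_0,T$ evaluated at $\lambda_1$. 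Also note $2<R_1<3$, because $R_1$ is an $f_0$-weighted average of $\nu\in\{2,3\}$ with $f_0>0$. Hence $3-R_1>0$ and $R_1-2>0$.

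Next, parametrise $s\in[0,1]$ by $y(s)=\beta_0-s\delta_0$, $t(s)=T-s\delta_T$ and $l(s)=\lambda_1+s\Delta\lambda$. Put $\phi(s)=y/(l-1+y+t)$, so that $\Delta f_0=\phi(1)-\phi(0)=\int_0^1\phi'(s)\,ds$. Differentiating, the $y'y$ terms cancel, and
\[
\frac{\phi'(s)}{\Delta\lambda}=\frac{-a\,(l-1+t)-y+y\,b}{(l-1+y+t)^2}.
\]
For the denominator, use $s\le 1$ and the Lipschitz bounds to get $y\ge\beta_0(1-c\Delta)$ and $t\ge T(1-c\Delta)$, together with $l-1\ge\alpha$. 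So on the whole path the denominator is at least $(\alpha+\kappa(\beta_0+T))^2$. The base is positive since $\kappa>0$ and $\alpha>0$ (the cell lies in $(1,2)$).

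For $\nu=3$ I bound the numerator from above. Drop $-a(l-1+t)\le0$, using $l>1$ and $t\ge0$, and use $yb\le y\,cT$ to get $y(cT-1)$. The hypothesis $3(1-\lambda_b/2)>2-\lambda_a$ enters here. By Lemma~\ref{lem:support}, $T\ge3(1-\lambda_1/2)\ge3(1-\lambda_b/2)>2-\lambda_a\ge2-\lambda_b=1/c$, so $cT-1>0$. Then $y\le\beta_0$ gives numerator $\le\beta_0(cT-1)$, a nonnegative quantity. Dividing by the denominator lower bound yields $\phi'/\Delta\lambda\le h_3(\beta_0,T)$ for every $s$. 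Integrating over $s$ and multiplying by $3-R_1>0$ gives the case $\nu=3$.

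For $\nu=2$ the factor is $\nu-R_1=-(R_1-2)<0$, so I need an upper bound on $-\phi'/\Delta\lambda$. Its numerator is $a(l-1+t)+y-yb\le c\beta_0(\lambda_b-1+T)+\beta_0$, using $a\le c\beta_0$, $l\le\lambda_b$, $t\le T$, $y\le\beta_0$ and $yb\ge0$. This is nonnegative, so dividing by the same denominator lower bound gives $h_2(\beta_0,T)$, and multiplying by $R_1-2>0$ finishes the case.

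The only delicate points are the signs. The numerator bounds must be nonnegative before they can be divided by a lower bound on the denominator, which is automatic for $\nu=2$ and for $\nu=3$ is exactly where the hypothesis on $\lambda_b$ is needed. One must also track that every quantity in $h_2,h_3$ is at $\lambda_1$, with the worst cases pushed to the cell endpoints.
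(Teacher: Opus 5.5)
Your proof is correct and is essentially the paper's argument. The paper applies the mean value theorem to $f_0$ along the same segment from $(\lambda_1,\beta(\lambda_1))$ to $(\lambda_2,\beta(\lambda_2))$ and bounds the partial derivatives at the intermediate point with the same sign analysis and the same denominator bound $\alpha+\kappa(\beta_0+T)$; your integral of $\phi'$, with the children lumped into $T$, is only a cosmetic variant. One small point: $cT-1>0$ holds without the hypothesis $3(1-\lambda_b/2)>2-\lambda_a$, since $cT\ge 3(1-\lambda_b/2)/(2-\lambda_b)=3/2$, which is how the paper argues it.
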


The proof applies the mean value theorem to $f_0$ along the segment from $(\lambda_1,\beta(\lambda_1))$ to $(\lambda_2,\beta(\lambda_2))$, bounds each partial derivative using its sign and Lemma~\ref{lem:lip}, and bounds numerators above and denominators below using $\kappa\beta_j(\lambda_1)\le\beta_j(\lambda_2)\le\beta_j(\lambda_1)$ (Appendix~\ref{app:proofs}).

On the rectangle $y\in[A,B]=[1-\lambda_b/2,\,1-\lambda_a/3]$, $t\in[\nu A,\nu B]$, the function $h_3$ is nondecreasing in both arguments and $h_2$ is nondecreasing in $y$ and nonincreasing in $t$; each condition is affine in $(y,t)$, so it suffices to check it at the worst corner, which the program does in exact arithmetic. By Lemmas~\ref{lem:mono} and~\ref{lem:env}, $\beta(\lambda_1)$ lies between $L_{\lambda_b}$ and $U_{\lambda_a}$ in the stochastic order, so
\[
\E[h_3]\le\E_{U_{\lambda_a},U_{\lambda_a}}[h_3],\qquad \E[h_2]\le\E_{U_{\lambda_a},L_{\lambda_b}}[h_2].
\]
\paragraph{The bracket on $R$.} Write $E_\nu(\lambda)=\E[f_0\mid\nu]$, so $R=2+E_3/(E_2+E_3)$, which is increasing in $E_3$ and decreasing in $E_2$. For $\lambda>1$, $f_0=\beta_0/(\lambda-1+\beta_0+T)$ is increasing in $\beta_0$ and decreasing in $T=\sum_{i=1}^{\nu}\beta_i$ and in $\lambda$, and $\beta_0$ is independent of $T$. So by Lemmas~\ref{lem:mono} and~\ref{lem:env}, for every $\lambda\in[\lambda_a,\lambda_b]$ we have $E_\nu^{\mathrm{lo}}\le E_\nu(\lambda)\le E_\nu^{\mathrm{hi}}$. Here $E_\nu^{\mathrm{hi}}$ takes $\beta_0\sim U_{\lambda_a}$, $T$ distributed as a lower envelope of the $\nu$-fold convolution of $L_{\lambda_b}$, and $\lambda=\lambda_a$; $E_\nu^{\mathrm{lo}}$ takes $\beta_0\sim L_{\lambda_b}$, $T$ from $U_{\lambda_a}$, and $\lambda=\lambda_b$. Set
\[
R_{\mathrm{lo}}=2+\frac{E_3^{\mathrm{lo}}}{E_2^{\mathrm{hi}}+E_3^{\mathrm{lo}}},\qquad
R_{\mathrm{hi}}=\min\Bigl\{m,\;2+\frac{E_3^{\mathrm{hi}}}{E_2^{\mathrm{lo}}+E_3^{\mathrm{hi}}}\Bigr\},\qquad
\E[f_0]_{\mathrm{lo}}=\tfrac12\bigl(E_2^{\mathrm{lo}}+E_3^{\mathrm{lo}}\bigr).
\]
The cap is valid because $R\le m$: given the $\beta_i$, $f_0$ is nonincreasing in $\nu$, and $\nu$ is independent of them, so $\E[\nu f_0]\le m\,\E[f_0]$ by Chebyshev's association inequality. Then $R(\lambda)\in[R_{\mathrm{lo}},R_{\mathrm{hi}}]$ and $\E[f_0(\lambda)]\ge\E[f_0]_{\mathrm{lo}}$ for every $\lambda$ in the cell.

\begin{proposition}\label{prop:cell}
If
\begin{equation}\label{eq:cell}
\tfrac12(3-R_{\mathrm{lo}})\,\E_{U_{\lambda_a},U_{\lambda_a}}[h_3]+\tfrac12(R_{\mathrm{hi}}-2)\,\E_{U_{\lambda_a},L_{\lambda_b}}[h_2]\;<\;\frac{R_{\mathrm{lo}}}{\lambda_b}\,\E[f_0]_{\mathrm{lo}},
\end{equation}
together with the corner conditions, then $v$ is strictly decreasing on $[\lambda_a,\lambda_b]$.
\end{proposition}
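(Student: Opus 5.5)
The plan is to reduce Proposition~\ref{prop:cell} to Lemma~\ref{lem:dq}, applied to every pair in the cell. By Lemma~\ref{lem:R} it suffices to show $x(\lambda_2)<x(\lambda_1)$ for all $\lambda_a\le\lambda_1<\lambda_2\le\lambda_b$. Fix such a pair and couple $\beta(\lambda_1),\beta(\lambda_2)$ on the same trees. By Lemma~\ref{lem:dq} we must show
\[
\E\Bigl[(\nu-R_1)\tfrac{\Delta f_0}{\lambda_2-\lambda_1}\Bigr]<\tfrac{R_1}{\lambda_1}\E[f_0(\lambda_2)] .
\]
I will bound the left side above and the right side below by the two sides of \eqref{eq:cell}, using only cell-uniform quantities.

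For the left side, first condition on $\nu$, which is independent of the $\beta_i$ and takes each value with probability $1/2$. On $\{\nu=3\}$, Lemma~\ref{lem:worst} bounds the integrand by $(3-R_1)h_3(\beta_0,T)$. The corner conditions make $h_3\ge0$ on the support rectangle (Lemma~\ref{lem:support}, Lemma~\ref{lem:mono}). Since $3-R_1\le 3-R_{\mathrm{lo}}$, the contribution is at most $\tfrac12(3-R_{\mathrm{lo}})\E[h_3]$.

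Next, $h_3$ is nondecreasing in both arguments on the rectangle. The variables $\beta_0,\beta_1,\beta_2,\beta_3$ at $\lambda_1$ are i.i.d. By Lemmas~\ref{lem:mono} and~\ref{lem:env}, their law is $\le_{\mathrm{st}} U_{\lambda_a}$. Stochastic order is preserved under independent sums, so $T\le_{\mathrm{st}}$ the 3-fold convolution of $U_{\lambda_a}$. Applying monotone coupling coordinatewise gives $\E[h_3]\le\E_{U_{\lambda_a},U_{\lambda_a}}[h_3]$.

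On $\{\nu=2\}$ the argument is symmetric. The bound is $(R_1-2)h_2$ with $h_2\ge0$ (again by a corner check) and $R_1-2\le R_{\mathrm{hi}}-2$. Here $h_2$ is increasing in $y$ and decreasing in $t$, so we replace $\beta_0$ by $U_{\lambda_a}$ and $T$ by the lower envelope built from $L_{\lambda_b}$. This yields the second term of \eqref{eq:cell}.

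For the right side, $R_1\ge R_{\mathrm{lo}}>0$ and $1/\lambda_1\ge 1/\lambda_b$. By the bracket established just before the proposition, $\E[f_0(\lambda_2)]\ge\E[f_0]_{\mathrm{lo}}$ because $\lambda_2$ lies in the cell. Hence
\[
\tfrac{R_1}{\lambda_1}\E[f_0(\lambda_2)]\ge\tfrac{R_{\mathrm{lo}}}{\lambda_b}\E[f_0]_{\mathrm{lo}} .
\]
Chaining these bounds with \eqref{eq:cell} gives the strict inequality in Lemma~\ref{lem:dq}. So $x$, and therefore $v$, is strictly decreasing on the cell.

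The main obstacle is the sign bookkeeping. Replacing $R_1$ by $R_{\mathrm{lo}}$ in the $\nu=3$ term and by $R_{\mathrm{hi}}$ in the $\nu=2$ term is legitimate only because $h_3,h_2\ge0$ on the relevant range. This needs $ct-1\ge0$, i.e.\ $t\ge 2-\lambda_b$, which holds for $t\ge3A$ under the stated corner condition $3(1-\lambda_b/2)>2-\lambda_a$. Positivity of the denominators also needs $\kappa>0$, and I would check that the corner conditions indeed give this.

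A second point to verify is that the monotone-coupling step is applied to the joint law of $(\beta_0,T)$ with independent coordinates. Stochastic dominance of product measures under coordinatewise-monotone functions handles this. A third point is that the rounded convolution used for the envelope of $T$ is itself a valid $\le_{\mathrm{st}}$ bound, which follows from Lemma~\ref{lem:env}'s rounding-in-the-safe-direction argument.
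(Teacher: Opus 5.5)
Your proposal is correct and follows the paper's proof. As in the paper, you bound the left side of \eqref{eq:dq} using Lemma~\ref{lem:worst}, the monotonicity of $h_3,h_2$ on the rectangle and the stochastic envelopes; you bound its right side using the bracket $[R_{\mathrm{lo}},R_{\mathrm{hi}}]$ and $\E[f_0]_{\mathrm{lo}}$; and you finish with Lemmas~\ref{lem:dq} and~\ref{lem:R}, while also making explicit the sign checks the paper leaves implicit. One small simplification: $h_3\ge0$ needs no corner condition, since $T\ge3(1-\lambda_b/2)=\tfrac32(2-\lambda_b)$ gives $cT\ge\tfrac32$ directly from Lemma~\ref{lem:support}.
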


\begin{proof}
The left side bounds the left side of \eqref{eq:dq} for every $\lambda_a\le\lambda_1<\lambda_2\le\lambda_b$ (Lemma~\ref{lem:worst}), and the right side bounds its right side from below. Apply Lemmas~\ref{lem:dq} and~\ref{lem:R}.
\end{proof}

\section{Results}\label{sec:results}

We ran the certificate on cells of width $0.01$ covering $[1.17,1.80]$ with $K=1000$, and on cells of width $0.005$ covering $[1.73,1.80]$ with $K=2000$. Table~\ref{tab:cells} lists selected cells; Figure~\ref{fig:margin} shows all of them. Every corner condition held. Rounding decisions near grid points were settled exactly in 308 cases for the first run and 66 for the second.

\begin{table}[h]
\centering
\begin{tabular}{@{}lccc@{}}
\toprule
Cell & $K$ & margin (RHS minus LHS of \eqref{eq:cell}) & certified \\
\midrule
$[1.17,1.18]$ & 1000 & $0.41711$ & yes \\
$[1.40,1.41]$ & 1000 & $0.24765$ & yes \\
$[1.60,1.61]$ & 1000 & $0.11398$ & yes \\
$[1.73,1.74]$ & 1000 & $0.00916$ & yes \\
$[1.74,1.75]$ & 1000 & $-0.00085$ & no \\
$[1.750,1.755]$ & 2000 & $0.00476$ & yes \\
$[1.755,1.760]$ & 2000 & $-0.00032$ & no \\
\bottomrule
\end{tabular}
\caption{Selected cells. All 57 cells of width 0.01 from $1.17$ to $1.74$ are certified, and the refined run certifies every cell from $1.73$ to $1.755$.}
\label{tab:cells}
\end{table}

\begin{figure}[!ht]
\centering
\includegraphics[width=0.85\linewidth]{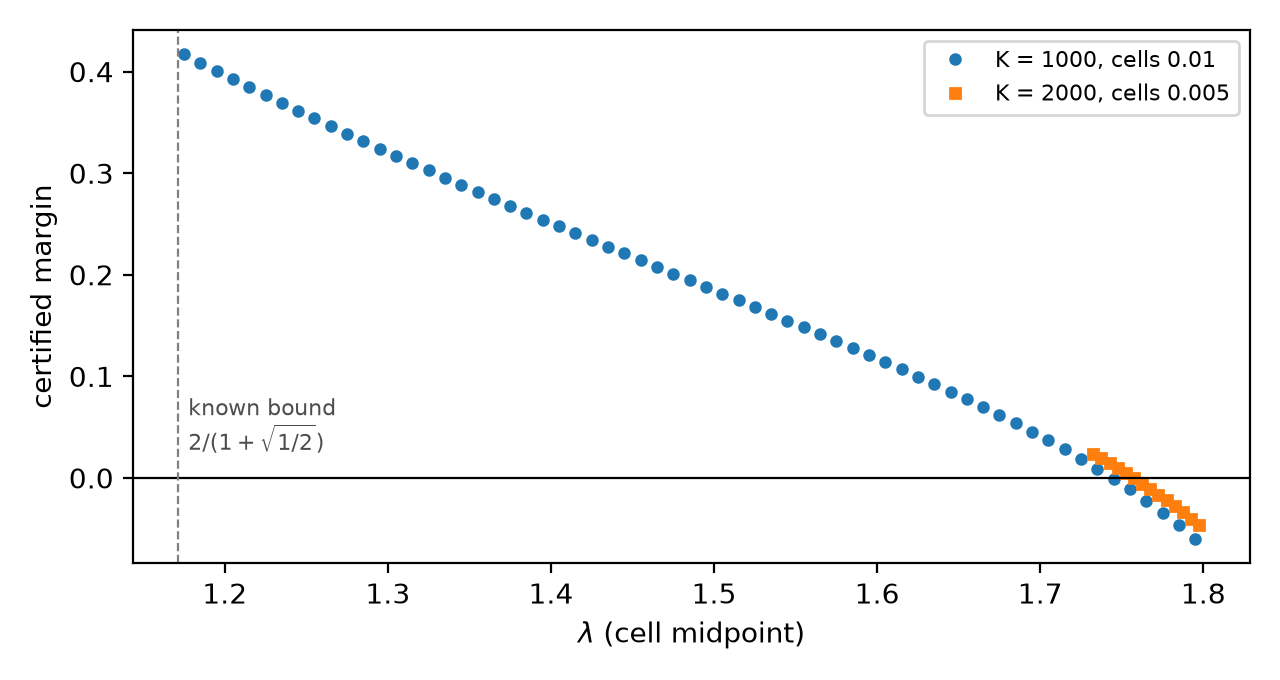}
\caption{Certified margin per cell: the right side minus the left side of \eqref{eq:cell}, plotted at each cell's midpoint. A positive margin certifies the cell. The dashed line marks the bound of \citet{swx2025}.}
\label{fig:margin}
\end{figure}

\paragraph{Independent check.} A second implementation re-derives every step from the lemmas and shares no code with the first: interval arithmetic in mpmath \citep{mpmath}, index decisions taken from the conservative end of each interval, and envelopes defined by their distribution functions. On a coarser grid ($K=150$) it certifies the cells $[1.17,1.18]$, $[1.40,1.41]$ and $[1.60,1.61]$ with margins $0.41463$, $0.24416$ and $0.10837$, slightly below those in Table~\ref{tab:cells}, as a coarser grid should give. At $K=1000$ it certifies $[1.730,1.735]$ with margin $0.02328$, and at $K=2000$ the last cell, $[1.750,1.755]$, with margin $0.00476$; there the two implementations' margins agree to within $10^{-9}$.

\begin{proof}[Proof of Theorem~\ref{thm:main}]
The certified cells cover $[1.17,1.755]$, and on each closed cell $v$ is strictly decreasing (Proposition~\ref{prop:cell}). A function that is strictly decreasing on each of finitely many closed intervals, each sharing at least a point with the next, is strictly decreasing on their union. On $[0,1.17]\subset[0,2/(1+\sqrt{1/2})]$ the result is \citet[Theorem 1.2]{swx2025}, and $[0,1.17]$ shares the point $1.17$ with the first cell.
\end{proof}

\FloatBarrier
\section{Related work}\label{sec:related}

The speed formula~\eqref{eq:speed} and its derivative at small bias are due to \citet{aidekon2014,aidekon2013note}; \citet{swx2025} extend the small-bias argument using the minimum offspring. On regular trees, \citet{songliu2026} compute the speed and the spectral radius explicitly and show both are monotone in the bias. \citet{bafs2014} treat high bias in their normalisation, which is small $\lambda$ here. The Einstein relation \citep{bhoz2013} and the differentiability result of \citet{bowditch2019} describe $v$ near and on the whole ballistic range; \citet{benarousfribergh2016} survey trapping and speed questions. Non-monotone speeds on trees that are not Galton--Watson are recorded in \citet{lpp1997}. Computer-assisted, rigorous bounds for other quantities of random structures include the confidence intervals of \citet{riordanwalters2007} for percolation thresholds; we are not aware of such certificates for the speed of a random walk. Floating-point error bounds follow \citet{higham2002}; the main program uses NumPy \citep{harris2020numpy}.

\section{Limitations}\label{sec:limits}

\begin{itemize}[leftmargin=*,itemsep=2pt]
\item \textbf{Partial range and one law.} The theorem covers $[0,1.755]$ of $[0,2.5)$ for a single offspring law.
\item \textbf{The crude Lipschitz constant is the obstacle.} With the constant bound $1/(2-\lambda)$ the margin is already negative on $[1.755,1.760]$ at $K=2000$ (Table~\ref{tab:cells}). Replacing the constant by a bound depending on $\beta$, computed by iterating the recursion for the derivative, pushes an uncertified floating-point version further, but we do not claim it here. Reaching $m$ needs distributional control of the derivative of $\beta$, and for $\lambda\ge2$ a new lower bound on $\beta$, since the binary support bound vanishes there.
\item \textbf{Floating point.} The per-cell arithmetic uses floating point with the written error bounds of Appendix~\ref{app:float} rather than a formally verified library; the interval-arithmetic re-check covers five cells, including the last, not every cell.
\item \textbf{Review.} The lemmas have not yet been reviewed by a specialist in random walks on trees.
\end{itemize}

\section{Conclusion}\label{sec:conclusion}

For offspring uniform on $\{2,3\}$ the speed of the biased walk is strictly decreasing on $[0,1.755]$, beyond the known $1.17157\ldots$. The certificate rests on three hand-proved reductions (a difference-quotient form of the monotonicity criterion, a pathwise Lipschitz bound on the conductance, and stochastic-order envelopes for its law) and one inequality per $\lambda$-cell checked with exact rational arithmetic. The pipeline should extend to other bounded offspring laws with minimum offspring at least $2$, with the support bound (Lemma~\ref{lem:support}) and the case analysis of Lemma~\ref{lem:worst} redone for each offspring value; for this law, what limits the reach is the crude Lipschitz constant, not the computation.

\paragraph{Code.} \url{https://github.com/samyama-ai/gw-speed-certificate}; \texttt{./run.sh} regenerates every certificate, the independent check and Figure~\ref{fig:margin}.

\bibliographystyle{plainnat}
\bibliography{paper24_gw_speed_certificate}

\appendix

\section{Proofs}\label{app:proofs}

\begin{proof}[Proof of Lemma~\ref{lem:R}]
$\E[(\nu\mp\lambda)f_0]=\E[\nu f_0]\mp\lambda\E[f_0]$; divide by $\E[f_0]>0$. The map $u\mapsto(u-1)/(u+1)$ is increasing, and $v=(x-1)/(x+1)$ with $x=R/\lambda$.
\end{proof}

\begin{proof}[Proof of Lemma~\ref{lem:dq}]
Write $N=\E[\nu f_0]$ and $M=\E[f_0]$. Then $N_2M_1-N_1M_2=M_1\bigl(\E[\nu\Delta f_0]-R_1\E[\Delta f_0]\bigr)=M_1\E[(\nu-R_1)\Delta f_0]$, so $R_2-R_1=\E[(\nu-R_1)\Delta f_0]/M_2$. Finally $x(\lambda_2)<x(\lambda_1)$ if and only if $R_2-R_1<R_1(\lambda_2-\lambda_1)/\lambda_1$.
\end{proof}

\begin{proof}[Proof of Lemma~\ref{lem:support}]
$S/(\lambda+S)$ is increasing in $S$, so by induction on depth the finite-depth values are monotone under inclusion of trees with a common root, and so is the limit. Every $\{2,3\}$ tree contains a binary tree and is contained in a ternary tree, On the $d$-ary tree the finite-depth values, started from $1$, decrease to the largest fixed point of $y\mapsto dy/(\lambda+dy)$, which is $1-\lambda/d$ (the other fixed point is $0$).
\end{proof}

\begin{proof}[Proof of Lemma~\ref{lem:mono}]
$S/(\lambda+S)$ is decreasing in $\lambda$ and increasing in $S$; induct on depth and pass to the limit.
\end{proof}

\begin{proof}[Proof of Lemma~\ref{lem:lip}]
Let $\delta_n=-\partial_\lambda\beta_n$. Differentiating the recursion gives $\delta_n=(S+\lambda\sum_i\delta_n(x_i))/(\lambda+S)^2$, and $\delta_n=0$ at depth $n$. If $\delta_n(x_i)\le c\,\beta_n(x_i)$ for the children, then $\delta_n\le(1+\lambda c)S/(\lambda+S)^2=(1+\lambda c)\beta_n/(\lambda+S)$. Since each child is the root of a $\{2,3\}$ tree, $\beta_n(x_i)\ge\beta(x_i)\ge1-\lambda/2$ by Lemma~\ref{lem:support}, so $\lambda+S\ge2$ and $\delta_n\le(1+\lambda c)\beta_n/2\le c\beta_n$ whenever $c\ge1/(2-\lambda)$. Thus $\beta_n(\lambda_1)-\beta_n(\lambda_2)\le\int_{\lambda_1}^{\lambda_2}\beta_n(s)/(2-s)\,ds$. Let $n\to\infty$ by dominated convergence ($0\le\beta_n\le1$), and use $\beta(s)\le\beta(\lambda_1)$ (Lemma~\ref{lem:mono}). The lower bound is Lemma~\ref{lem:mono}.
\end{proof}

\begin{proof}[Proof of Lemma~\ref{lem:env}]
The stochastic order is preserved by independent sums and increasing maps, $\beta$ is a fixed point of $\Tc_\lambda$, and moving mass upward can only increase a law in $\le_{\mathrm{st}}$. Induct on $k$.
\end{proof}

\begin{proof}[Proof of Lemma~\ref{lem:worst}]
By Lemmas~\ref{lem:mono} and~\ref{lem:lip},
\[
\kappa\beta_j(\lambda_1)\le\beta_j(\lambda_2)\le\beta_j(\lambda_1),\qquad \frac{\Delta\beta_j}{\Delta\lambda}\in[-c\beta_j(\lambda_1),0].
\]
The function $f_0(\lambda,y)=y_0/(\lambda-1+\sum_j y_j)$ is $C^1$ for $\lambda>1$, $y\in(0,1]^{\nu+1}$; by the mean value theorem there is a point $\xi$ on the segment with $\Delta f_0=\partial_\lambda f_0(\xi)\Delta\lambda+\sum_j\partial_jf_0(\xi)\Delta\beta_j$. At $\xi$, with $D=\xi_\lambda-1+\sum_j\xi_j$: $\partial_\lambda f_0=-\xi_0/D^2$, $\partial_0f_0=(D-\xi_0)/D^2>0$, $\partial_if_0=-\xi_0/D^2$ for $i\ge1$.

If $\nu=3$, then $w=3-R_1>0$ since $R_1\le m$. The $\Delta\beta_0$ term of $w\,\Delta f_0/\Delta\lambda$ is nonpositive, and each $\Delta\beta_i$ term is at most $w\xi_0c\beta_i(\lambda_1)/D^2$. Hence $w\Delta f_0/\Delta\lambda\le w\xi_0(cT-1)/D^2$, where $cT-1>0$ because $cT\ge3(1-\lambda_b/2)/(2-\lambda_b)=3/2$. Bound $\xi_0\le\beta_0(\lambda_1)$ above and $D\ge\alpha+\kappa(\beta_0(\lambda_1)+T)$ below.

If $\nu=2$, then $w=2-R_1<0$. The $\partial_\lambda$ term is $|w|\xi_0/D^2\le|w|\beta_0(\lambda_1)/D^2$, which gives the $1$ in $h_2$; the $\Delta\beta_i$ terms, $i\ge1$, are nonpositive; the $\Delta\beta_0$ term is at most $|w|(D-\xi_0)c\beta_0(\lambda_1)/D^2$; and $D-\xi_0\le\lambda_b-1+T$. So $w\Delta f_0/\Delta\lambda\le|w|\beta_0(\lambda_1)(1+c(\lambda_b-1+T))/D^2$, with the same lower bound on $D$.
\end{proof}

\section{Floating-point error}\label{app:float}

We use the standard model \citep{higham2002}: an operation whose result does not underflow returns the exact result times $1+\delta$ with $|\delta|\le u=2^{-53}$, and a sum or dot product of $n$ nonnegative terms, in any order, has relative error at most $\gamma_n=nu/(1-nu)$. All masses and integrands are nonnegative. For $K\le2000$ the convolutions, the push-forward to the grid and the cumulative sums give relative errors at most $\gamma_{7K+7}<2\times10^{-12}$. Masses far in a tail can underflow. An operation with a subnormal result errs by at most $2^{-1074}$ in absolute value, and one step uses fewer than $10^{8}$ operations, so the absolute error per step is below $10^{-300}$. The absolute part of the shift covers it: each iterate is checked against the exact image of the previous computed iterate, so errors do not accumulate across steps. Each integrand is a rational function with at most six operations. It is evaluated at floating-point grid points $\mathrm{fl}(a)+\mathrm{fl}(h)\,i$, which differ from the exact ones by a relative $3u$, and its logarithmic sensitivity to each coordinate is at most $2$; with the expectation sums, the relative error is below $\gamma_{4K+2}+20u<10^{-12}$. Every expectation used is at least $10^{-3}$, so underflow inside the sums is negligible against the relative $10^{-10}$ by which each is widened in the safe direction. Rounding positions $x=(t-a)/h$ are computed with absolute error below $10^{-11}$, so a floating-point decision taken only when $x$ is at least $10^{-9}$ from an integer agrees with the exact one; closer cases are decided in exact rational arithmetic. Floating-point numbers convert exactly to rationals, and the final comparison~\eqref{eq:cell} is exact.

\end{document}